\documentclass[11pt]{article}

\usepackage[a4paper,margin=1in]{geometry}

\usepackage{graphicx}%
\usepackage{multirow}%
\usepackage{amsmath,amssymb,amsfonts}%
\usepackage{amsthm}%
\usepackage{mathrsfs}%
\usepackage[title]{appendix}%
\usepackage{xcolor}%
\usepackage{textcomp}%
\usepackage{enumerate}
\usepackage{epsfig}
\usepackage{verbatim}
\usepackage{bm} 
\usepackage{booktabs}
\usepackage{subcaption}
\usepackage{caption}

\providecommand{\e}{\varepsilon}

\providecommand{\R}{\ensuremath{\mathbb{R}}}
\providecommand{\N}{\ensuremath{\mathbb{N}}}

\newcommand{\realiz}{\mathrm{R}}

\newcommand{\depth}{L}
\newcommand{\size}{M}

\newcommand{\ceil}[1]{\lceil #1 \rceil}

\theoremstyle{plain}
\newtheorem{theorem}{Theorem}[section]
\newtheorem{proposition}[theorem]{Proposition}

\theoremstyle{definition}

\newtheorem{definition}[theorem]{Definition}

\theoremstyle{remark}
\newtheorem{remark}[theorem]{Remark}

\numberwithin{equation}{section}

\title{A simple shallow neural network for emulating the solution to singularly perturbed problems}

\author{C.~Xenophontos\thanks{Corresponding author: \texttt{xenophontos.christos@ucy.ac.cy}} \and
A.~Raina\thanks{Email: \texttt{raina.aayushman@ucy.ac.cy}}}
\date{Department of Mathematics and Statistics, University of Cyprus,\\
P.O.~Box 20537, Nicosia 1678, Cyprus}

\begin{document}

\maketitle

\begin{abstract}
We consider (feed-forward) Neural Networks (NNs) for the emulation of the solution to singularly 
perturbed second order boundary value problems, with two small parameters. We describe 
a shallow NN which exploits available asymptotic expansions for the solution. These additive decompositions into smooth and layer components, allow for derivative estimates which are explicit in the order of differentiation as well as the singular perturbation parameter(s) \cite{melenk, Irene, SX}. Utilizing such 
decompositions, we propose a simple NN for emulating the solution to such  problems
using the $\tanh$ activation function together with different training objectives, such as residual or energy minimization. The key idea is to augment the approximation space with suitable exponential functions, similar to enriched spaces in finite element methods, e.g.~\cite{Kellogg}. Numerical examples in one and two dimensions, including a smooth non-tensor-product domain, illustrate the resulting parameter-robust behavior over the tested perturbation ranges.
\end{abstract}

\noindent\textbf{Keywords:} Singular perturbations, boundary layers, Neural Networks, $\tanh$ activation function

\medskip
\noindent\textbf{MSC Classification:} 34B08, 34D15, 65L11, 65N30

\section{Introduction}
\label{sec:intro}

Singularly perturbed problems arise naturally in a wide range of scientific and engineering applications, including fluid and solid mechanics, electromagnetics, and many others. In such settings, the governing partial differential equations involve a small parameter $\e\in(0,1]$ in the physically relevant regime, leading to singular perturbations.

A characteristic feature of singularly perturbed problems (SPPs) is that their solutions admit an additive decomposition into a regular component and one or more layer components. When the problem data are analytic, the regular component is analytic as well. Although the regular part, denoted by $u_\e^{S}$, may itself depend on the perturbation parameter $\e$, its derivatives satisfy estimates that remain uniform with respect to $\e$. We refer the reader to \cite{TemamSgPert} and the references therein for further details.
In contrast, the layer component $u_\e^{BL}$ does not possess uniform smoothness as $\e$ varies. Instead, its derivatives become increasingly large as $\e$ decreases, with the $k$-th derivative typically exhibiting growth of order $O(\e^{-k})$.

When approximating the solution $u_\e$ numerically, for example by the Finite Element Method (FEM), it is generally necessary to employ a \emph{layer-adapted} mesh (see, e.g., \cite{L}), whose construction depends explicitly on the singular perturbation parameter $\e$. Well-known examples for fixed-order FEMs are the Shishkin \cite{Shishkin2} and Bakhvalov \cite{B} meshes. For high-order $p$-FEM discretizations, a widely used alternative is the \emph{Spectral Boundary Layer} (SBL) mesh introduced in \cite{MXO} (see also \cite{SS}). Although the SBL mesh consists of only a fixed number of elements, it is specifically designed to resolve the individual components of the asymptotic solution decomposition within the regions where they are significant. As a result, it yields approximation errors that are uniform with respect to $\e$ and converge exponentially as the polynomial degree $p$ increases \cite{melenk}.

A similar framework applies, with suitable modifications, to problems involving \emph{two} singular perturbation parameters associated with the two highest-order derivative terms. In this case, the asymptotic expansion again separates into smooth and layer components, although the structure of the layers is now determined by the interaction between the two perturbation parameters (see, e.g., \cite{SX}). The layer-adapted meshes described above can be generalized to this setting, leading to approximation methods that remain robust with respect to both perturbation parameters (see, e.g., \cite{Irene}).

Recently, Neural Networks (NNs) have emerged as a powerful tool for the numerical approximation of partial differential equations; see, e.g., \cite{Huang} and the references therein. SPPs, however, remain particularly challenging because their solutions could contain boundary and interior layers whose widths may vary by several orders of magnitude as the perturbation parameter(s) change. Standard NN architectures must therefore learn simultaneously both the smooth component of the solution and the rapidly varying layer components, a task that often leads to poor robustness when the perturbation parameter becomes very small.

The literature on NN approximations for SPPs has grown rapidly in recent years; see, for example, \cite{AD, ACD}, \cite{BEFFM}--\cite{CGGY}, \cite{WY}--\cite{Xu}, \cite{OSX, OPS, XS}. Existing approaches range from approximation theoretic expressivity results to practical algorithms employing a variety of architectures and training strategies. Nevertheless, to the best of our knowledge, no single architecture has been shown to provide uniformly accurate approximations across all perturbation regimes using the same underlying network design.

The main idea of the present work is that the asymptotic structure of the solution should determine the approximation space used by the neural method. Rather than expecting the optimizer to discover the boundary layers, we explicitly incorporate the known layer behavior through suitable exponential enrichments. The shallow $\tanh$ component is therefore relieved of the need to represent the dominant boundary-layer scales on its own, while the enriched features provide functions with the correct localization and parameter-dependent widths. This leads to a remarkably simple architecture, which we call the \emph{Boundary Layer Neural Network} (BL-NN). In approximation-theoretic terms, the construction is an enriched space, analogous in spirit to enriched finite element spaces; see, e.g., \cite{Kellogg}. Related structure-informed NN approaches for singular perturbations include \cite{ACD, CGGY, GHJL}.

The distinction we emphasize is that the principal modification is made to the \emph{approximation space} rather than to a particular loss functional or optimization algorithm. In one dimension the implemented BL-NN consists of a one-hidden-layer $\tanh$ network augmented by two explicit exponential layer features. Thus it is more precise to regard the method as an \emph{exponentially enriched shallow $\tanh$ network}, rather than as a pure $\tanh$ network. Its design is directly motivated by the asymptotic decomposition of the solution and by the expressivity results of \cite{OSX}. The same enriched space can then be coupled with different training objectives, including residual minimization and, for self-adjoint problems, energy minimization.

In the sections that follow we describe the proposed BL-NN architecture and demonstrate, through a variety of numerical examples, that it provides robust and accurate emulations over a wide range of singular perturbation regimes in one and two spatial dimensions.
The philosophy advocated in this paper is that whenever reliable asymptotic information about the solution is available, it should be incorporated directly into the architecture of the neural network rather than left for the optimization process to discover.

The rest of the article is organized as follows. Section \ref{sec:model} presents the model
problem and the regularity of its solution. Section \ref{sec:nn} gives the necessary 
definitions and results from the literature regarding NNs, and Section \ref{sec:SBL2NN} describes the proposed BL-NN. The results of numerical computations appear in Section \ref{sec:nr}, and finally Section \ref{sec:Concl} includes our conclusions.

We will denote by $I$ an open, bounded interval in $\mathbb{R}$, and by $L^{\infty}(I)$ the space of
essentially bounded functions on $I$, with norm $\Vert \cdot \Vert_{L^{\infty}(I)}$. The space of square-integrable
functions on $I$ will be denoted by $L^2(I)$, and its norm by $\Vert \cdot \Vert_{0,I}$. Finally, the letter $C$, with or without decorations, will denote a generic positive constant independent of any discretization
or singular perturbation parameters.

\section{Second order SPPs and the regularity of their solution}
\label{sec:model}
We consider the boundary value problem (BVP): find $u \in C^2(I) \cap C(\bar{I})$ such that
\begin{eqnarray}
-\e_1 u''(x) + \e_2  b(x)u'(x) + c(x) u(x) &=& f(x) \; , \; x \in I = (0,1) \; , \label{eq:de} \\
u(0) =  u(1)&=& 0, \label{eq:bc}
\end{eqnarray}
where $\e_1, \e_2 \in (0,1]$ and $b(x), c(x), f(x)$ are given analytic functions on
$\overline{I}=[0,1]$, satisfying $\forall \;x \in \overline{I}$,
$$
b(x) \geq \underline{b}>0, \quad c(x) \geq \underline{c}>0, \quad c(x)-\frac{\varepsilon_2}{2} b^{\prime}(x) \geq \underline{\delta} >0,
$$
for some positive constants $\underline{b}, \underline{c}, \underline{\delta} \in \mathbb{R}$, as well as
\begin{equation}
\label{eq:analytic}
\Vert f^{(n)} \Vert_{L^{\infty}(I)} \leq C_f K^n_f n! \: , \: \Vert b^{(n)} \Vert_{L^{\infty}(I)} \leq C_b K^n_b n! 
 \: , \: \Vert c^{(n)} \Vert_{L^{\infty}(I)} \leq C_c K^n_c n!
\end{equation}
$\forall \; n \in \mathbb{N}_0$,  where $C_f, K_f, C_b, K_b, C_c, K_c$, are positive constants independent of $\e_1$ and  $\e_2$.
(The superscript in parentheses denotes differentiation of order $n$.)

The structure of the solution to (\ref{eq:de})--(\ref{eq:bc}) depends on the roots of the characteristic
equation associated with the differential operator. For this reason, we let $\lambda_0(x), \lambda_1(x)$ be the solutions of the characteristic equation and set
\begin{equation}\label{mu}
\mu_0=-\max _{x \in[0,1]} \lambda_0(x), \; \mu_1=\min _{x \in[0,1]} \lambda_1(x),
\end{equation}
or equivalently,
$$
\mu_{0,1}=\min _{x \in[0,1]} \frac{\mp \varepsilon_2 b(x)+\sqrt{\varepsilon_2^2 b^2(x)+4 \varepsilon_1 c(x)}}{2 \varepsilon_1}.
$$
The values of $\mu_0, \mu_1$ determine the strength of the boundary layers and since $\left|\lambda_0(x)\right|< \left|\lambda_1(x)\right|$ the layer at $x=1$ is stronger than the layer at $x=0$. Essentially, there are three regimes, as seen in Table 1 below \cite{L}. Figure \ref{F0} shows the exact solution for $b(x)=c(x)=f(x)=1$, and different choices of $\e_1$ and  $\e_2$ corresponding to all three regimes.

\begin{table}[h]
\begin{tabular}{||cccc||}
\hline & & $\mu_0$ & $\mu_1$ \\
\hline \hline convection-diffusion & $\varepsilon_1 \ll \varepsilon_2=1$ & 1 & $\varepsilon_1^{-1}$ \\
\hline convection-reaction-diffusion & $\varepsilon_1 \ll \varepsilon_2^2 \ll 1$ & $\varepsilon_2^{-1}$ & $\varepsilon_2 / \varepsilon_1$ \\
\hline reaction-diffusion & $1 \gg \varepsilon_1 \gg \varepsilon_2^2$ or $\varepsilon_1 \approx \varepsilon_2^2$ & $\varepsilon_1^{-1 / 2}$ & $\varepsilon_1^{-1 / 2}$ \\
\hline
\end{tabular}
\caption{Different regimes based on the relationship between $\varepsilon_1$ and $\varepsilon_2$.}
\end{table}

\begin{figure}[h]
\begin{center}
\includegraphics[width=0.6\textwidth]{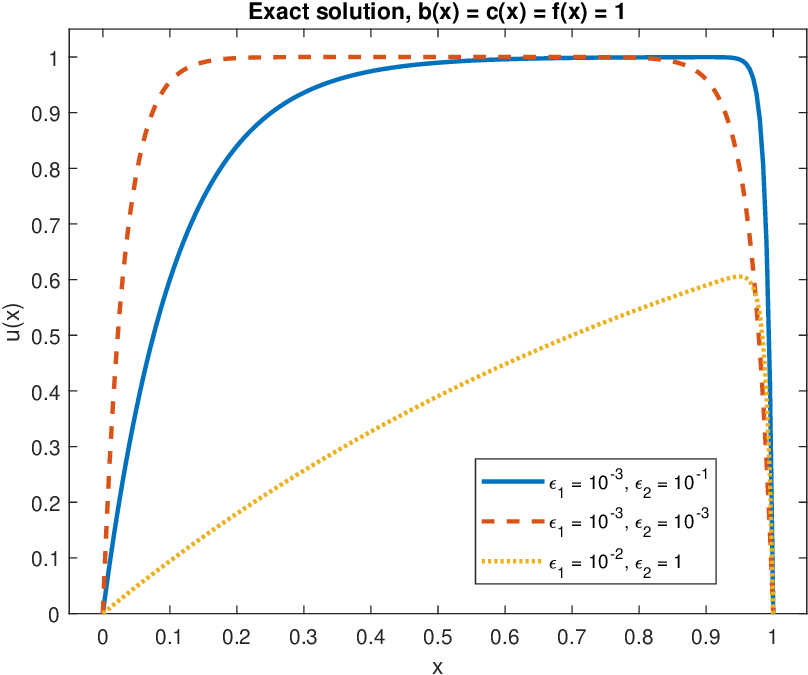} 
\end{center}
\caption{The exact solution of (\ref{eq:de})--(\ref{eq:bc}),  with $b(x)=c(x)=f(x)=1$.}
\label{F0}
\end{figure}

We will focus on the most interesting case, when  
$$
\e_1 \ll \e^2_2,
$$ 
thus we will have two different width boundary layers. The analytic regularity of the solution to (\ref{eq:de})--(\ref{eq:bc}) was studied in \cite{Irene, SX}, where it was shown that, in this case, the solution $u_{\e}$ may be decomposed as
\begin{equation}\label{decomp}
u(x) = u_S(x) + u_{BL}^{-}(x) + u_{BL}^{+}(x) + u_R(x),
\end{equation}
where $u_S$ is the smooth part, $u^{\pm}_{BL}$ are the boundary layers at the two endpoints,
and $u_R$ is the remainder. 
Moreover, there exist positive constants 
$K_1, \tilde{K}_S, \tilde{K}, \bar{K}, \tilde{\delta}, \beta$ independent of $\e_1$ and  $\e_2$, such that $\forall \; n \in \mathbb{N}_0$
and $x \in [0,1]$, there holds \cite{Irene, SX}
\begin{eqnarray*}
\Vert u^{(n)} \Vert_{L^{\infty}(I)} &\leq C & {K}_1^n \max\{ n, \mu_1 \}^{n}  \; ,  \label{Classical3} \\
\left\Vert u_{S}^{(n)}\right\Vert_{L^{\infty}(I)} &\leq C &  n! \tilde{K}_{S}^{n}\; ,   \label{uM1bound} \\
\left\vert \left(u_{BL}^{-}\right) ^{(n)}(x)\right\vert & \leq C  & 
\tilde{K}^{n}\mu_{0}^{n}e^{-\beta \mu_0 x}\; ,  \label{uBL1abound} \\
\left\vert \left( u_{BL}^{+}\right) ^{(n)}(x)\right\vert &\leq C & \bar{K}^{n}  \mu_1^n e^{-\beta \mu_1 (1-x)} \; ,   \label{uBL1bbound} \\
\left\Vert u_{R} \right\Vert _{{L^{\infty}( I)}} &\leq C & e^{-\tilde{\delta} /\varepsilon _{2}}.  \label{rM1bound}
\end{eqnarray*}

We point out that the above result says that the solution consists of a smooth part (analytic if the data are analytic), exponential layers of different widths which are strongly localized near the endpoints, and a negligible remainder.

\section{Neural Networks}
\label{sec:nn}
Following  \cite{PV,OPS}, we define a NN in terms of its \emph{weight matrices} and \emph{bias vectors}.
We distinguish between a neural network and the function it realizes,
called \emph{realization} of the NN, which is the composition of parameter-dependent affine transformations
and a nonlinear activation function.
We briefly recall the NN formalism from, e.g., \cite{OPS}.

\begin{definition}
\label{def:NeuralNetworks}
For $d,L\in\N$, a \emph{neural network $\Phi$} 
with input dimension $d \geq 1$ and number of layers $L\geq 1$, 
comprises a finite sequence of matrix-vector tuples, i.e.
\begin{align*}
\Phi = ((A_1,b_1),(A_2,b_2),\ldots,(A_L,b_L)),
\end{align*}
along with a nonlinear activation function $\varrho : \mathbb{R} \rightarrow \mathbb{R}$.
For $N_0 := d$ and \emph{numbers of neurons $N_1,\ldots,N_L\in\N$ per layer}, 
for all $\ell=1,\ldots, L$ it holds that
$A_\ell\in\R^{N_\ell \times N_{\ell-1} }$ and
$b_\ell\in\R^{N_\ell}$.

The \emph{realization} of $\Phi:\R^{N_0} \to \R^{N_L}$ as a map, is the function
\begin{align*}
\realiz(\Phi): \R^d\to\R^{N_L} : x \to x_L,
\end{align*}
where
\begin{align*}
x_0 & := x,
\\
x_\ell & := \varrho( A_\ell x_{\ell-1} + b_\ell ),
\qquad\text{ for }\ell=1,\ldots,L-1,
\\
x_L & := A_L x_{L-1} + b_L.
\end{align*}
Here, for $\ell=1,\ldots,L-1$, the activation function
$\varrho$ is applied componentwise: for
$y = (y_1,\ldots,y_{N_\ell})\in\R^{N_\ell}$ we denote
$\varrho(y) = ( \varrho (y_1), \ldots,
\varrho (y_{N_\ell}) )$.

We call the layers indexed by $\ell=1,\ldots,L-1$ \emph{hidden layers}, and
in those layers the activation function is applied.  No
activation is applied in the last layer of the NN. 

We refer to $\depth(\Phi) := L$ as the \emph{depth} of $\Phi$
and call $\size(\Phi) $ the \emph{size} of $\Phi$,
which is the number of nonzero components 
in the weight matrices $A_\ell$ and the bias vectors $b_\ell$.
Furthermore, we call $d$ and $N_L$ the
\emph{input dimension} and the \emph{output dimension}.
\end{definition}

If the desired accuracy is achieved by a NN with a fixed number of layers $L$  (usually at a low value), we refer to it as a \emph{shallow} NN. If, on the other hand, $L$ is increasing in order to achieve the desired accuracy, we refer to the network as a \emph{deep} NN.  The ``calculus'' of NNs, i.e. how they can be combined, is described in, e.g., \cite[Sec. 2]{OSX}. 

In this article, we will utilize the $\tanh$ activation function
$$
	\rho: \R \to \R: x \mapsto \tanh(x) = \frac{e^x - e^{-x}}{e^x + e^{-x}}.
$$
A nice feature of this activation function is that it may be used for the emulation of $e^{-x}$ within a
prescribed tolerance $TOL$. In particular, the 2 layer NN
$$
\Phi_{\exp}^{\tanh} := \left( (A_1, b_1) , (A_2, b_2)  \right),
$$
with $A_1 = \frac{1}{2}, b_1 = \frac{x_*}{2}, A_2 = -\frac{e^{x_*}}{2}, b_2 =\frac{e^{x_*}}{2}, x_* = \log(2/TOL)$, has realization
$$
R[\Phi_{\exp}^{\tanh}] (x) = \frac{2}{2e^x + TOL}.
$$
Moreover, $\tanh$ NNs can emulate smooth functions using a fixed number of layers \cite{Mishra}.
Hence, \(\tanh\) NNs are well suited for approximating the analytic component of solutions to SPPs, while the preceding approximation result also shows that the exponential layer profiles can, in principle, be represented efficiently by neural networks. In the computational architecture developed in the next section, however, we exploit the available asymptotic information more directly by augmenting a shallow \(\tanh\) network with explicit exponential layer features.

\section{Emulation of the solution to SPPs}
\label{sec:SBL2NN}
In terms of expressivity there exists a NN for emulating the solution to reaction-diffusion SPPs with constant coefficients, within a prescribed tolerance \cite[Theorem 14]{OSX}. For the problem (\ref{eq:de})--(\ref{eq:bc}), with constant coefficients, we have the following.

\begin{proposition}\label{prop:NNbl}
Let $p \in \mathbb{N}$ be given and assume $u$ is the solution to (\ref{eq:de})--(\ref{eq:bc}), with $b(x)=b>0, c(x) = c >0$. Assuming $f$ is analytic, there exists a $\tanh$ NN $\Phi^{p}_{{SPP}}$ with size $M(\Phi^{p}_{{SPP}})$ and length $L(\Phi^{p}_{{SPP}})$, such that for all $\e_1 \ll \e_2^2 \ll 1$, the following estimate holds
$$
\Vert u - R[\Phi^{p}_{SPP}] \Vert_{L^{\infty}(I)} \leq C \left( e^{-\beta p} + e^{-\tilde{\delta} / \varepsilon_2} \right),
$$
for some positive constants $C, \beta, \tilde{\delta}$. Moreover, $M(\Phi^p_{SPP}) = O(p)$ and $L(\Phi^{p}_{{SPP}})= \ceil{3 \beta p/2}$.
\end{proposition}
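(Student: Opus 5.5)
The plan is to follow the strategy of \cite[Theorem 14]{OSX} and emulate, one component at a time, the decomposition (\ref{decomp}) $u = u_S + u_{BL}^- + u_{BL}^+ + u_R$. The parallel networks are then combined with the NN calculus of \cite[Sec. 2]{OSX}. The remainder $u_R$ is not emulated at all; its contribution $C e^{-\tilde{\delta}/\e_2}$ appears directly in the error bound. It then suffices to build three $\tanh$ NNs $\Phi_S$, $\Phi^-$, $\Phi^+$, each of size $O(p)$ and depth at most $\ceil{3\beta p/2}$, whose realizations approximate $u_S$, $u_{BL}^-$, $u_{BL}^+$ to accuracy $Ce^{-\beta p}$ uniformly in $\e_1,\e_2$. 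Parallelization, together with depth equalization by identity-emulating $\tanh$ subnetworks (or simply by padding the shallower networks), gives the network $\Phi^p_{SPP}$. Its output is the sum of the three realizations.

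For constant coefficients the layer parts are explicit. The characteristic roots $\lambda_{0},\lambda_1$ are constants, so the homogeneous solutions are $e^{\lambda_0 x}$ and $e^{\lambda_1 x}$. Concretely, $u_{BL}^-(x) = a_0 e^{-\mu_0 x}$ and $u_{BL}^+(x) = a_1 e^{-\mu_1(1-x)}$, with $|a_0|,|a_1|\le C$ fixed by the boundary values of $u_S$. Choosing the layers this way means the boundary-condition correction is exact, up to the exponentially small cross terms $e^{-\mu_0}$ and $e^{-\mu_1}$, which go into $u_R$. Each layer is then an affine change of variables composed with $e^{-t}$, $t\ge 0$. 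I would emulate it with $\Phi_{\exp}^{\tanh}$ from Section~\ref{sec:nn} at tolerance $TOL = e^{-\beta p}$. For $t \ge 0$,
$$\left|e^{-t} - \frac{2}{2e^t+TOL}\right| = \frac{TOL\, e^{-t}}{2e^t+TOL}\le TOL/2 .$$
This gives a network of fixed size, and the weights absorb $\mu_0$, $\mu_1$ and $a_0,a_1$.

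The smooth part $u_S$ satisfies the analytic bounds $\|u_S^{(n)}\|\le C n!\tilde K_S^n$, uniformly in $\e_1,\e_2$. I would first approximate it by a polynomial $\pi_p$ of degree $p$, for instance Chebyshev interpolation, with error $Ce^{-\beta p}$. Analyticity on a Bernstein ellipse whose size is independent of the $\e$'s gives this rate. Next I would emulate $\pi_p$ by a $\tanh$ network. The monomials, or a Horner scheme, can be built from an approximate multiplication: $\tanh$ emulates $x^2$ by finite differences near a point where $\tanh''\neq 0$, and multiplication follows by polarization. In the route of \cite{OSX}/\cite{Mishra}, evaluating $\pi_p$ by a Horner-type scheme takes $O(p)$ multiplications, each done by a fixed-size subnetwork, which gives size $O(p)$. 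Depth grows linearly in $p$, and the constant can be arranged to give $\ceil{3\beta p/2}$.

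The main obstacle is this last step. The errors of the approximate multiplications must be controlled through $p$ compositions, so each multiplication must be accurate to about $e^{-\beta p}$ with $p$-independent size. At the same time the intermediate values must stay in a bounded range on which the $\tanh$ square-emulation is valid. I would first use the Horner recursion with rescaling so that all intermediate quantities lie in $[-1,1]$. The error then propagates additively with a factor $O(p)$ that is absorbed by slightly reducing $\beta$. Tracking these constants also yields the precise depth $\ceil{3\beta p/2}$.
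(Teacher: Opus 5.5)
Your skeleton is the paper's proof. You use (\ref{decomp}) and leave $u_R$ untouched. You emulate $u_{BL}^{\pm}$ by $\Phi^{\tanh}_{\exp}$ after the affine maps $x\mapsto\mu_0x$ and $x\mapsto\mu_1(1-x)$; your bound $TOL\,e^{-t}/(2e^t+TOL)\le TOL/2$ is correct. Then you parallelize, pad to equal depth, and apply the triangle inequality.

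On the layers you are more careful than the paper, which only says the ``leading layer profiles'' are known. Declaring $u_{BL}^-=a_0e^{-\mu_0x}$ and $u_{BL}^+=a_1e^{-\mu_1(1-x)}$ exactly is what makes an $e^{-\beta p}$ bound for the layer subnetworks true. You should still say why the modified remainder $w=u-u_S-a_0e^{-\mu_0x}-a_1e^{-\mu_1(1-x)}$ stays $O(e^{-\tilde\delta/\e_2})$. Write $\mathcal{D}v:=-\e_1v''+\e_2bv'+cv$. The argument needs three facts:
\begin{itemize}
\item $\|f-\mathcal{D}u_S\|_{L^\infty(I)}\le Ce^{-\tilde\delta/\e_2}$. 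This is a property of the truncated outer expansion used in \cite{SX}, not one of the bounds listed in Section~\ref{sec:model}.
\item The maximum principle $\|w\|_{L^\infty(I)}\le\max\{|w(0)|,|w(1)|\}+\|\mathcal{D}w\|_{L^\infty(I)}/c$.
\item $\mu_0\ge C/\e_2$ when $\e_1\ll\e_2^2$, which controls the cross terms $e^{-\mu_0}$ and $e^{-\mu_1}$.
\end{itemize}

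The step that fails as written is the smooth part. The paper gets size $O(p)$, depth $\ceil{3\beta p/2}$ and error $Ce^{-\beta p}$ together from \cite[Cor.~5.8]{Mishra}; you rebuild this with Chebyshev interpolation and Horner's scheme. Each Horner step needs its own hidden layer, so a degree-$q$ interpolant costs depth $L\gtrsim q$. Its error is at best of order $\rho^{-q}$ (Bernstein's theorem), where $\rho>1$ is the Bernstein-ellipse parameter of $u_S$. From $\|u_S^{(n)}\|\le Cn!\tilde K_S^n$ you only know $\rho$ through $\tilde K_S$, so $\rho$ may be close to $1$.

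The proposition, however, demands error $Ce^{-\beta p}$ at depth $\ceil{3\beta p/2}$, which is roughly $e^{-2L/3}$ at depth $L$. Your construction achieves this only if $\log\rho\ge 2/3$, whatever $\beta$ and $q$ you choose. Identity padding can only increase depth, so it cannot help. Hence ``the constant can be arranged'' is not justified.

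The fix is to decouple depth from polynomial degree, then pad to exactly $\ceil{3\beta p/2}$. Any of these works:
\begin{itemize}
\item Cite \cite{Mishra} as the paper does; Section~\ref{sec:nn} notes that those $\tanh$ constructions use a fixed number of layers.
\item Evaluate $\pi_p$ in logarithmic depth with tree or Estrin-type products.
\item Do several Horner steps per hidden layer, since one $\tanh$ layer emulates any fixed-degree polynomial in its inputs with fixed width.
\end{itemize}

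A smaller point in the same step concerns rescaling Horner to keep intermediates in $[-1,1]$. This can cost a factor $e^{O(p)}$, because monomial coefficients of a Chebyshev interpolant can grow exponentially. That factor is absorbed not by ``slightly reducing $\beta$'' but by making each multiplication accurate to $e^{-Cp}$. This is free in size, since the $\tanh$ squaring emulator has fixed size and only its weights grow. Alternatively, use Clenshaw's recurrence, which keeps the intermediates bounded.
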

\begin{proof}
We utilize the decomposition (\ref{decomp}) and construct separate NNs for each component, except for the remainder which is already exponentially small.

For the smooth (analytic) part $u_S$, we use \cite[Cor. 5.8]{Mishra} to deduce the existence of a $\tanh$ NN, say $\Phi^{\tanh}_S$, such that 
$$
\left\Vert u_S - R\left[\Phi^{\tanh}_{S}\right] \right\Vert_{L^{\infty}(I)} \leq C e^{-\beta p},
$$
with $M(\Phi^{\tanh}_S) = O(p)$, and $L(\Phi^{\tanh}_S) = \ceil{3 \beta p/2}$.

Since the coefficients are constant, the leading layer profiles are explicitly known, hence the shallow NN $\Phi^{\tanh}_{\text{exp}}$ emulates them within a given tolerance. In fact,
$$
\left\Vert u^{\pm}_{BL} - R^{\pm}\left[\Phi^{\tanh}_{\text{exp}}\right] \right\Vert_{L^{\infty}(I)} \leq C e^{-\beta p},
$$
where $R^{\pm}$ denotes the realization of the network at each endpoint.

We combine the three subnetworks in parallel (see, e.g., Proposition 2.3 in \cite{OPS}), and we obtain $\Phi^p_{SPP}$. The shallower exponential subnetworks are extended by identity layers so that the parallelization has the depth of the deepest component. Then,
$$
\begin{aligned}
\left\|u-R\left[\Phi_{\mathrm{SPP}}^p\right]\right\|_{L^{\infty}(I)} \leq & \left\|u_S-R\left[\Phi^{\tanh}_S\right]\right\|_{L^{\infty}(I)}+\left\|u_{BL}^{-}-R^{-}\left[\Phi_{\text{exp}}^{\tanh}\right]\right\|_{L^{\infty}(I)} \\
& +\left\|u_{BL}^{+}-R^{+}\left[\Phi_{\text{exp}}^{\tanh}\right]\right\|_{L^{\infty}(I)}+\left\|u_R\right\|_{L^{\infty}(I)} \\
\leq & C\left(e^{-\beta p}+e^{-\tilde{\delta} / \varepsilon_2}\right) .
\end{aligned}
$$
The resulting NN has size $M(\Phi_{SPP}^p)$ equal to the size of the three subnetworks, i.e.~$M(\Phi_{SPP}^p)=O(p)$,
and depth $L(\Phi_{SPP}^p) = \ceil{3 \beta p/2}=O(p)$.
\end{proof}

\begin{remark}
Proposition~\ref{prop:NNbl} is an expressivity statement for a family whose size and depth may grow with $p$; it is more general than the one-hidden-layer enriched architecture used in the computations below.
\end{remark}

\begin{remark}
The assumption of constant coefficients in the above proposition is made so that the layers will be explicitly known,  hence the $\tanh$ NN $\Phi^{\tanh}_{\text{exp}}$ can emulate them within the desired tolerance. In our implementation, however, we will choose the constants in the exponential layers to be trainable. Treating the exponential decay rates as trainable provides a practical mechanism for accommodating layer scales that are not known exactly, as occurs for variable coefficients.
\end{remark}

\subsection{A boundary layer resolving NN: BL-NN}
\label{sec:BLNN}
Guided by the asymptotic expansions, we simply augment/enhance the trial solution with exponential functions. For the BVP (\ref{eq:de})--(\ref{eq:bc}), the boundary layers behave like
$$
u_{BL}^{-} \sim e^{-\beta \mu_0 x} \; , \; u_{BL}^{+} \sim e^{-\gamma \mu_1 (1-x)},
$$
where $\beta, \gamma$ are positive $O(1)$ constants, which will be treated as trainable. Accordingly, in the computations we use a shallow $\tanh$ NN (one hidden layer with $n$ neurons) and augment it directly with the two exponential features. We use as trial function
\begin{equation}\label{uNN}
u_{NN}(x)= \sum_{j=1}^n a_j \tanh \left(w_j x+b_j\right)+d_L e^{-\beta \mu_0 x} +d_R e^{-\gamma \mu_1 (1-x)},
\end{equation}
in which $a_j, w_j , b_j, \beta, \gamma$ are learnable quantities, $\mu_0, \mu_1$ are given by (\ref{mu}), and $d_L, d_R$ are chosen so that the boundary conditions are enforced strongly -- actually, the parameters $d_L, d_R$ are updated at each step so that the boundary conditions are satisfied. In a sense, this resembles the \emph{enrichment} of, e.g., finite element spaces (see, e.g., \cite{Kellogg} and the references therein). In fact, the architecture is
$$
u_{NN} = u^N_S + \sum_i \zeta_i E_i,
$$
where $u^N_S \in V_{NN} = \text{span} \{ \tanh(w_j x + b_j)\}$, $E_i$ are layer functions and $\zeta_i \in \mathbb{R}$. Thus, from an approximation-theoretic point of view, the essential construction is the enriched space
$$
u_{NN} \in V_{NN} \oplus \text{span}\{ E_i \}.
$$

\begin{remark}
The BL-NN can be summarized independently of the particular training objective as follows. Starting from an available asymptotic decomposition, we (i) identify the location and width of each layer, (ii) introduce corresponding layer features $E_i$, (iii) enrich a shallow $\tanh$ approximation space by $\operatorname{span}\{E_i\}$, (iv) enforce the boundary conditions strongly whenever the chosen parametrization permits this, and (v) determine the learnable parameters by minimizing a suitable residual or energy functional on layer-adapted training points. This separates the roles of the approximation space, sampling strategy, and optimization algorithm.
\end{remark}

\begin{remark}
The strong enforcement of the boundary conditions is not essential. We have performed numerical experiments in which we used \emph{Nitsche's} method \cite{Nitsche} for weakly imposing the boundary conditions, with almost identical results.
\end{remark}

\subsubsection{Shallow Ritz}
\label{shallowritz}
In the Ritz method, we minimize an energy functional with respect to the weights and biases, and it is most natural for self-adjoint problems. 
For the problem (\ref{eq:de})--(\ref{eq:bc}), we first define the integrating factor
$$
\mu(x)=\exp \left(-\int_0^x \frac{\varepsilon_2 b(s)}{\varepsilon_1} d s\right),
$$
so that 
$$
-\varepsilon_1\left(\mu(x) u^{\prime}(x)\right)^{\prime}+\mu(x) c(x) u(x)=\mu(x) f(x) .
$$
We then minimize the functional
$$
\mathcal{J}(u_{NN}):= \frac{1}{2} \int_0^1 \mu(x) \left(\e_1 (u_{NN}^{\prime}(x))^2+c(x) u_{NN}^2(x)\right)dx -\int_0^1 \mu(x) f(x) u_{NN}(x) dx, 
$$
with respect to the learnable parameters, using a \emph{Shishkin} mesh \cite{Shishkin2} for generating the training points. We approximate all integrals using (composite) Gaussian quadrature. We note that, in a strongly convection-dominated regime, the integrating factor may itself vary on an extreme scale and can lead to poor numerical conditioning. For this reason the Ritz formulation is most attractive in the self-adjoint reaction-diffusion setting considered below; the nonsymmetric convection-reaction-diffusion examples are treated by residual minimization.

\subsubsection{Shallow PINN}
\label{shallowpinn}
In Physics Informed Neural Networks (PINNs) \cite{LLF, RPK}, a collocation residual is minimized with respect to the weights and biases. For the problem (\ref{eq:de})--(\ref{eq:bc}), let
\[
r(x;u_{NN})=- \e_1 u''_{NN}(x) + \e_2 b(x) u'_{NN}(x) + c(x)u_{NN}(x)-f(x),
\]
denote the differential residual. At collocation points $\{x_i\}_{i=1}^N$ we minimize the mean-squared residual
\begin{equation}\label{PINNloss}
\mathcal{L}(u_{NN}) = \frac{1}{N}\sum_{i=1}^N \left|r(x_i;u_{NN})\right|^2.
\end{equation}
The boundary conditions are not included as penalty terms in the one-dimensional experiments because they are imposed strongly through the parametrization in (\ref{uNN}). We again use a \emph{Shishkin} mesh \cite{Shishkin2} to generate the collocation points. Thus the layer-adapted sampling and the exponential enrichment are separate ingredients of the method.

\section{Numerical results}
\label{sec:nr}
In this section we present the results of numerical computations for second order SPPs, in one and two dimensions. 
We will provide solution plots for relatively large values of the singular perturbation parameters, in order for the phenomena to be visible.  If an exact solution is available, we will use it for comparison, and if not we will compare the emulation with a numerical solution obtained using the FEM.

The numerical experiments are designed to assess whether the enriched architecture remains accurate as the perturbation parameters decrease. Over the tested parameter ranges, the $L^\infty$, $L^2$, and energy errors remain nearly parameter-independent, while the unweighted $H^1$ error may grow as the layers become thinner. We use both residual- and energy-based training objectives, depending on the differential operator, but employ the stochastic Adam optimization algorithm \cite{adam} in all examples. Accordingly, the computations support robustness with respect to the tested perturbation parameters and to these two choices of training objective. The architecture also extends naturally to two dimensions (cf. Examples 3 and 4).

\vspace{0.2cm}

\noindent
\textbf{Example 1:} 
\vspace{0.2cm}

\noindent
We consider the BVP (\ref{eq:de})--(\ref{eq:bc}), with $\e_1\ll \e^2_2$, and $ b(x)=c(x)=f(x)=1$, i.e.~a reaction-convection-diffusion SPP, with an exact solution available. We will be using the shallow PINN of Section \ref{shallowpinn}, with $u_{NN}$ given by (\ref{uNN}).

First we perform a systematic study to choose the hyperparameters. In particular, we vary one parameter while keeping the others fixed, and the results are summarized in Table \ref{table2}, in which the maximum absolute pointwise error between the exact solution and the emulation is reported each time.

\begin{table}[htbp]
\centering
\caption{Hyperparameter study for Example 1 with
$\varepsilon_1=10^{-5}$ and $\varepsilon_2=10^{-2}$.  (While varying the one parameter, the rest were fixed at the values shown in bold.)}
\label{table2}

\renewcommand{\arraystretch}{1.15}
\setlength{\tabcolsep}{10pt}

\begin{tabular}{cc}

\begin{tabular}{cc}
\multicolumn{2}{c}{\textbf{Neurons}}\\
\toprule
Number & $L^\infty$ error\\
\midrule
10 & $2.2574\times10^{-3}$\\
\textbf{20} & $\mathbf{2.1719\times10^{-4}}$\\
30 & $1.2752\times10^{-3}$\\
\bottomrule
\end{tabular}

&

\begin{tabular}{cc}
\multicolumn{2}{c}{\textbf{Learning rate}}\\
\toprule
Rate & $L^\infty$ error\\
\midrule
$10^{-1}$ & $4.7404\times10^{-3}$\\
$\mathbf{10^{-2}}$ & $\mathbf{2.3140\times10^{-4}}$\\
$10^{-3}$ & $1.8486\times10^{-3}$\\
\bottomrule
\end{tabular}

\\[8mm]

\begin{tabular}{cc}
\multicolumn{2}{c}{\textbf{Training epochs}}\\
\toprule
Epochs & $L^\infty$ error\\
\midrule
1000 & $6.2350\times10^{-3}$\\
2000 & $7.7126\times10^{-4}$\\
3000 & $2.9866\times10^{-4}$\\
4000 & $2.6370\times10^{-4}$\\
5000 & $2.3716\times10^{-4}$\\
\textbf{6000} & $\mathbf{2.1719\times10^{-4}}$\\
7000 & $2.3142\times10^{-4}$\\
\bottomrule
\end{tabular}

&

\begin{tabular}{cc}
\multicolumn{2}{c}{\textbf{Shishkin points}}\\
\toprule
Points & $L^\infty$ error\\
\midrule
20  & $2.1720\times10^{-4}$\\
\textbf{40}  & $\mathbf{2.1719}\times\mathbf{10^{-4}}$\\
80  & $2.1719\times10^{-4}$\\
\bottomrule
\end{tabular}

\end{tabular}

\end{table}

Guided by the results in Table \ref{table2}, we choose for the remaining experiments in this example:
$$\text{Epochs} = 6000, \text{Neurons} = 20, \text{Shishkin points} = 40, \text{Learning rate} = 0.01.$$

\noindent
Now, for the proposed BL-NN, we show in Table \ref{table_BLNNex3} different error measures for various combinations of $\e_1 \ll \e_2^2$, and in Figure \ref{fig_BLNN4} we show the emulation and exact solution, as well as the absolute pointwise maximum error between them, for $\e_1 = 10^{-3}, \e_2 = 10^{-1}$.  The lengthiest run took about 1 minute.
\begin{table}[ht]
	\centering
	\caption{Error measures for the BL-NN emulation, for Example 1.}
	\begin{tabular}{lccccc}
		\hline
		$\e_1$ & $\e_2$ & ${L^{\infty}}$ error & ${L^{2}}$ error & $ H^{1}$ error & Energy error \\
		\hline
		$10^{-3}$ & $10^{-1}$ & $1.2525\times10^{-3}$ & $ 8.0309\times10^{-4}$ & $1.1757\times10^{-2}$ & $8.8461\times10^{-4}$\\ 
		$10^{-5}$& $10^{-2}$ & $1.1438\times10^{-3}$ & $ 3.2610\times10^{-4}$ & $2.6165\times10^{-2}$ & $3.3643\times10^{-4}$\\
		$10^{-7}$ & $10^{-3}$ & $1.3679\times10^{-3}$ & $ 3.6994\times10^{-4}$ & $9.4642\times10^{-2}$ & $3.7114\times10^{-4}$\\ 
		$10^{-9}$ & $10^{-4}$ & $1.3974\times10^{-3}$ & $ 3.7452\times10^{-4}$ & $3.0363\times10^{-1}$ & $3.7465\times10^{-4}$\\ 
		$10^{-11}$ & $10^{-5}$ & $9.4782\times10^{-4}$ & $ 2.3284\times10^{-4}$ & $3.1633\times10^{-1}$ & $3.7834\times10^{-4}$\\ 
 \hline
	\end{tabular}
	\label{table_BLNNex3}
\end{table}

\begin{figure}[h!]
\begin{center}
\includegraphics[width=0.475\textwidth]{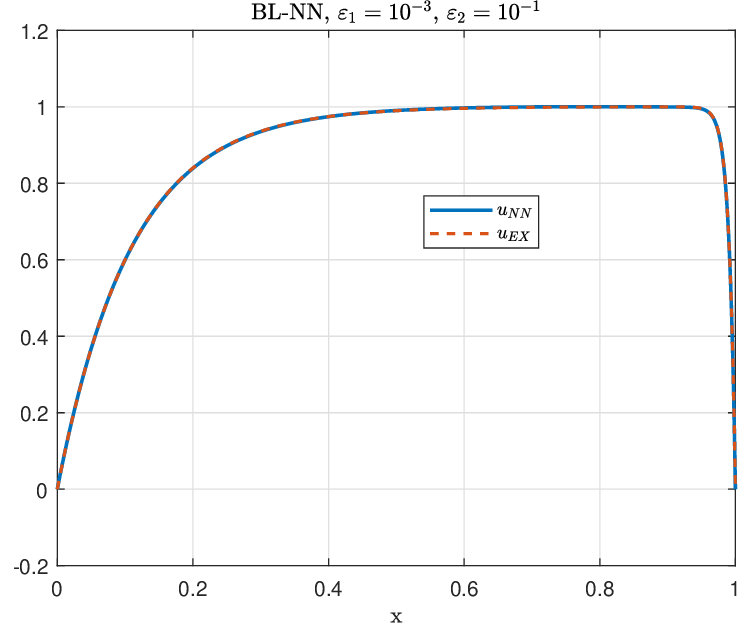}
\mbox{ }
\includegraphics[width=0.475\textwidth]{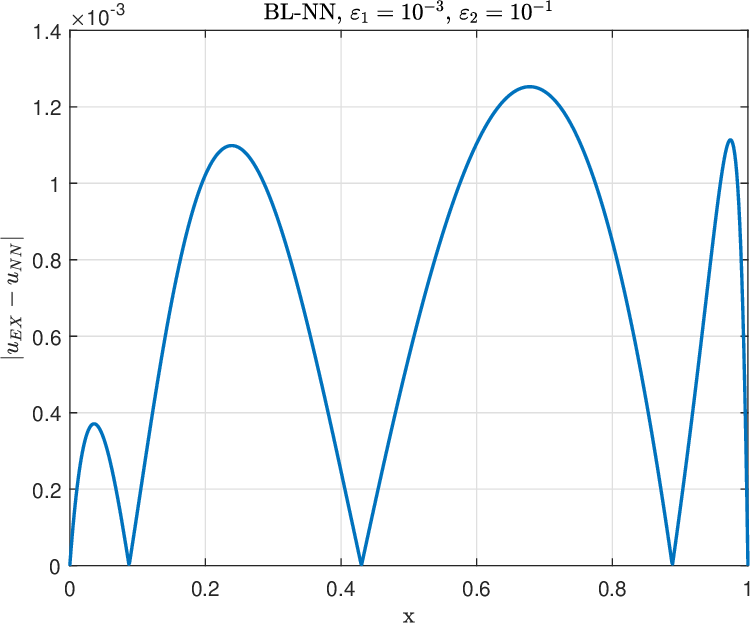}
\end{center}
\caption{Comparison of the exact solution and BL-NN approximation with $\e_1=10^{-3}, \e_2 = 10^{-1}$, for Example 1.}
\label{fig_BLNN4}
\end{figure}

From Table \ref{table_BLNNex3}, we see that the method is robust and emulates the solution with $O(10^{-3})$ accuracy, over all tested parameter combinations. In Figure \ref{tr1} we show the training history for $\varepsilon_1 = 10^{-7}, \varepsilon_2 = 10^{-3}$ as well as $\varepsilon_1 = 10^{-11}, \varepsilon_2 = 10^{-5}$. (Other values of $\varepsilon_1, \varepsilon_2$ gave similar results.)

\begin{figure}[h!]
\begin{center}
\includegraphics[width=0.475\textwidth]{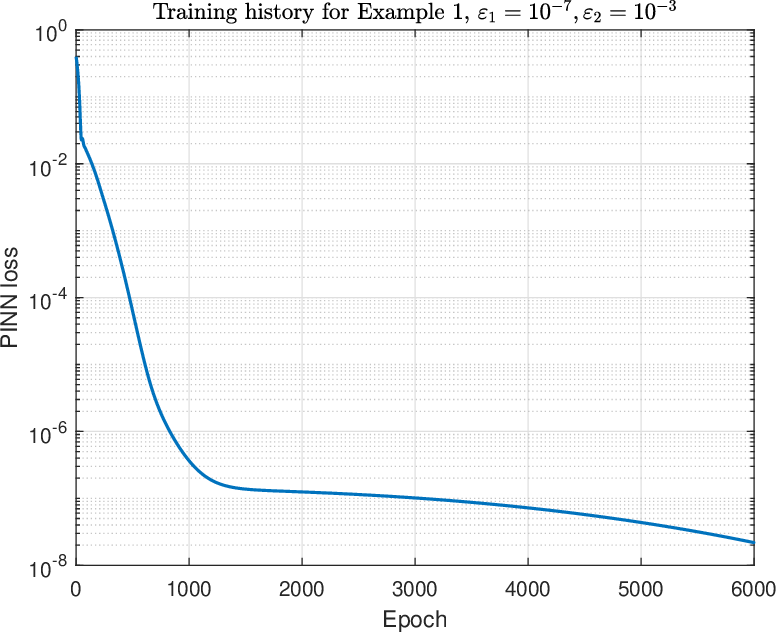}
\mbox{ }
\includegraphics[width=0.475\textwidth]{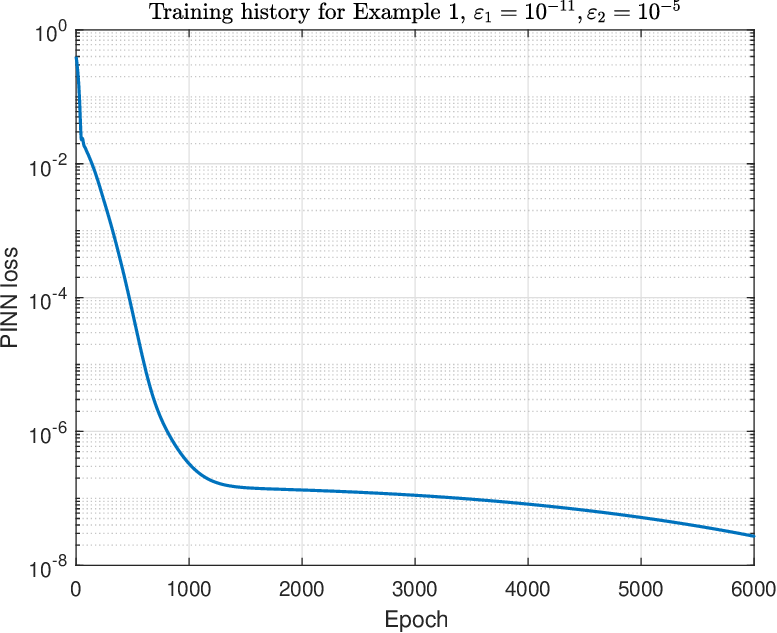}
\end{center}
\caption{Training history for Example 1.}
\label{tr1}
\end{figure}

We next want to see the effect of \emph{not including} the enrichments in (\ref{uNN}). First we use no enrichment, then we use just one on the left, and then just one on the right. Note that the case of no enrichments corresponds to using a standard PINN one layer network. For the representative values $\e_1=10^{-5}, \e_2 = 10^{-2}$, we compute the emulation, and in Figure \ref{enr} we show the exact solution along with $u_{NN}$. It is evident from the figure that using both enrichments yields the best results. Other values of the singular perturbation parameters gave similar results. So for the remainder of this section we will include all appropriate enrichments.

\begin{figure}[h!]
\begin{center}
\includegraphics[width=0.475\textwidth]{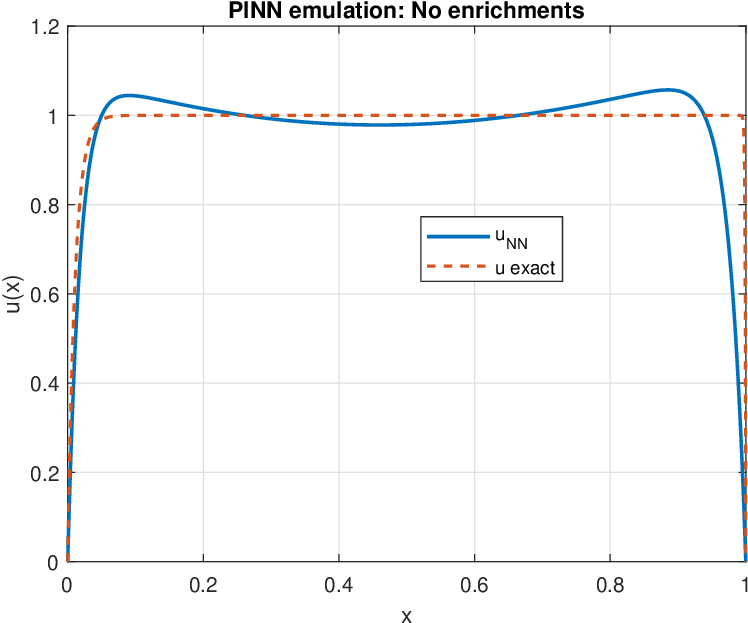}
\mbox{ }
\includegraphics[width=0.475\textwidth]{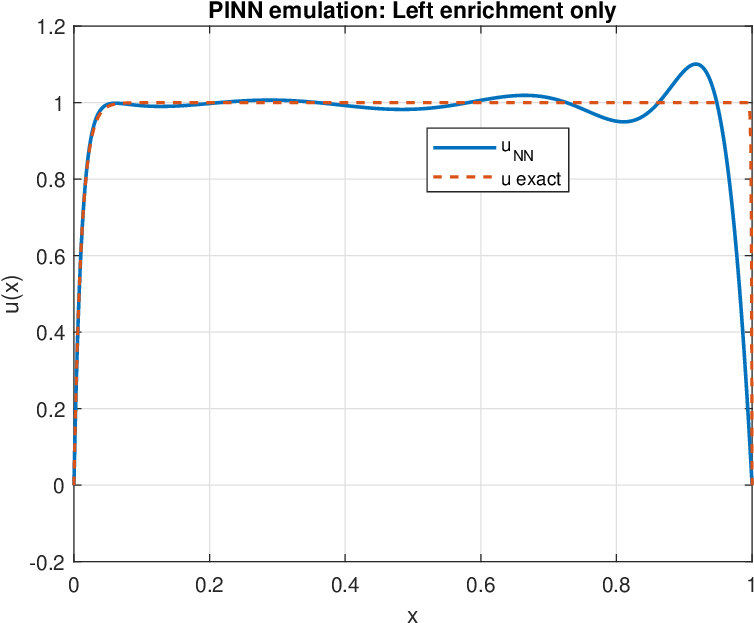}
\mbox{ }
\includegraphics[width=0.475\textwidth]{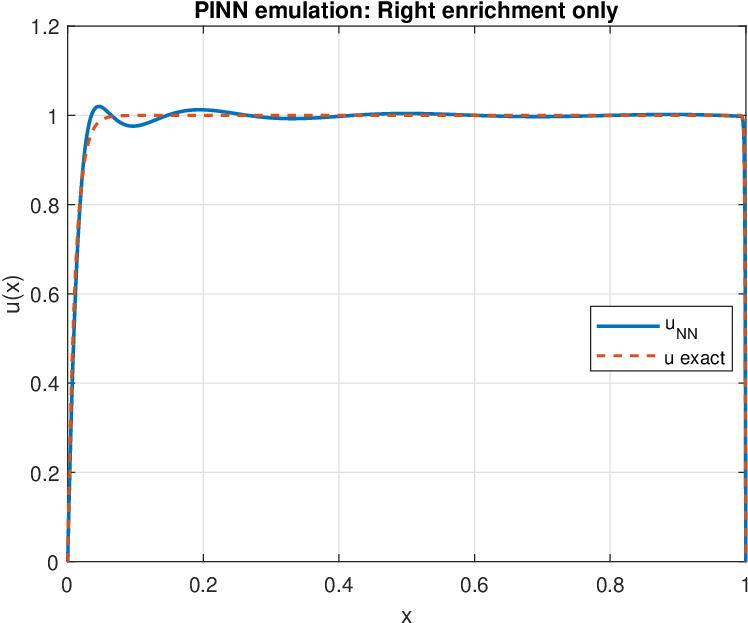}
\mbox{ }
\includegraphics[width=0.475\textwidth]{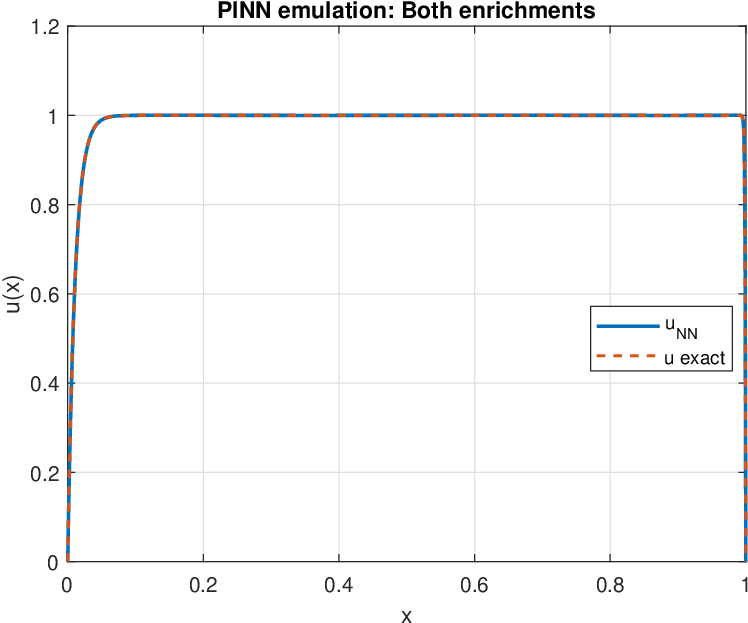}
\mbox{ }
\end{center}
\caption{Effect of using enrichments, $\e_1=10^{-5}, \e_2 = 10^{-2}$, for Example 1.}
\label{enr}
\end{figure}

\vspace{1cm}

\noindent
\textbf{Example 2:} 
\vspace{0.2cm}

\noindent
We consider the BVP (\ref{eq:de})--(\ref{eq:bc}), with $\e_1= \e^2, b(x)=0, c(x)=1+x^2, f(x) = e^{-x^2}$, i.e.~a reaction-diffusion SPP, \emph{without} an available exact solution. We will compare $u_{NN}$ with an approximation $u_{FEM}$ obtained using the $hp$ version of the FEM. Since we have a self-adjoint problem, we will use the shallow Ritz method of Section \ref{shallowritz}. With this example we illustrate that even when an exact solution is not available (e.g., to guide the choices of the hyperparameters), the proposed BL-NN gives very satisfactory results. Hence we will not perform an ablation study, nor will we consider the effect of not using all the enrichments. With (\ref{uNN}) as a trial solution, and with the same hyperparameters as in Example 1, we show in
Figure \ref{fig_BLNN1} (left) the emulation $u_{NN}$ (for relatively large values of $\e$). It appears qualitatively correct, given that we expect (same width) layers at the two endpoints. 
\begin{figure}[h!]
\begin{center}
\includegraphics[width=0.5\textwidth]{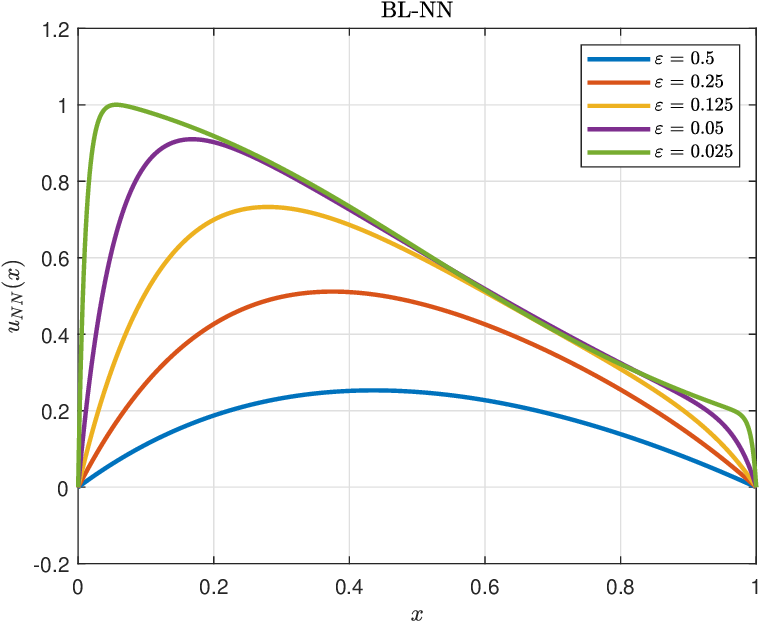}
\mbox{ }
\includegraphics[width=0.475\textwidth]{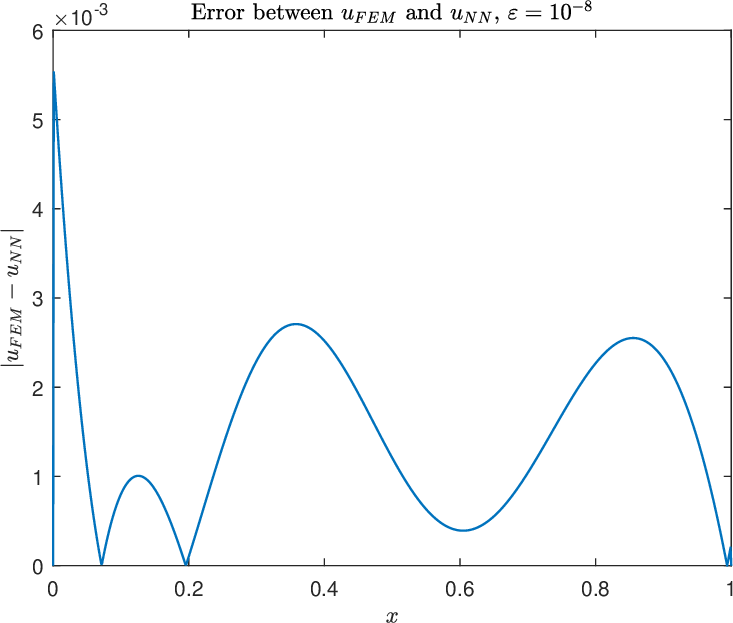}
\end{center}
\caption{Left: emulated solution using the BL-NN, for Example 2, with unknown exact solution. Right: error between the emulation and the $hp$-FEM approximation with polynomials of degree $p=15$.}
\label{fig_BLNN1}
\end{figure}

In order to assess the accuracy of the method, we compare $u_{NN}$ with an approximation obtained with the $hp$ FEM on the \emph{Spectral Boundary Layer} mesh \cite{MXO}. In Figure \ref{fig_BLNN1} (right) we show the pointwise absolute error between the two, for $\e=10^{-8}$, which is of $O(10^{-3})$. This provides an independent numerical reference suggesting that the BL-NN solution is close to a robust $hp$-FEM approximation. A direct comparison of computational efficiency would require matched implementations and timings at comparable accuracy and is therefore not claimed here.

\newpage

\noindent
\textbf{Example 3:} 
\vspace{0.2cm}

\noindent
As a two dimensional extension of the proposed method, we consider the convection-reaction-diffusion problem: find $u(x,y) \in C^2(\Omega) \cap C(\overline{\Omega})$ such that
\begin{eqnarray}
-\e_1 \nabla^2 u + \e_2 b \frac{\partial u}{\partial x} + c(x,y) u(x,y) &=& f(x,y) \; \text{ in } \; \Omega = (0,1)^2, \label{pde_2D}\\
u|_{\partial \Omega} &=& 0, \label{bc_2D}
\end{eqnarray}
where $\e_1 < \e_2 \in (0,1]$ are parameters that may approach zero, $b$ is a positive constant, and $c(x,y), f(x,y)$ are given sufficiently smooth functions, with $c(x,y) \ge \underline c>0 \; \forall \; (x,y) \in \overline{\Omega}$. Assuming $\e_1 \ll \e_2^2$, the solution $u$ will feature different width exponential layers at $x=0$ and $x=1$, as well as characteristic/parabolic layers at $y=0$ and $y=1$. In particular, we expect the solution to have an $O(\mu^{-1}_0)$ layer at $x=0$ and an $O(\mu^{-1}_1)$ layer at $x=1$, where $\mu_0, \mu_1$ are given by (\ref{mu}). A characteristic layer of width $O(\sqrt{\e_1})$ will also appear at $y=0$ and $y=1$. See, e.g., \cite{Irene} for a decomposition and regularity results. For the example we choose $b =1, c(x,y)=2$,
and we calculate $f(x,y)$, so that the exact solution is
$$
u(x, y)=\left(1-e^{-x / \varepsilon_2}\right)\left(1-e^{-\frac{(1-x)}{\varepsilon_1}} \right)\left(1-e^{-y / \sqrt{\varepsilon_1}}\right)\left(1-e^{-\frac{1-y}{\sqrt{\varepsilon_1}}}\right) .
$$

The expected asymptotic structure may be viewed schematically as a smooth component, two exponential $x$-layers, two characteristic $y$-layers, and four corner-layer contributions. Motivated by these components, we extend the one-dimensional architecture by combining the corresponding edge-layer and corner-layer features, and choose as trial solution
\begin{eqnarray*}
u_{NN}(x,y) &=& \sum_{j=1}^n a_j \tanh(w^x_j x + w^y_j y + b_j) + d_1 e^{-\gamma_1 x \mu_0} + d_2 e^{-\gamma_2 (1-x) \mu_1} + d_3 e^{-\gamma_3 y/{\sqrt{\e_1}} }\\
&+&  d_4 e^{-\gamma_4 (1-y)/\sqrt{\e_1}}+ d_5 e^{-\gamma_5 (x \mu_0 + y /\sqrt{\e_1})} + d_6 e^{-\gamma_6 ((1-x) \mu_1 + y /\sqrt{\e_1})}\\ 
&+& d_7 e^{-\gamma_7( x \mu_0 + (1-y) /\sqrt{\e_1})} + d_8 e^{-\gamma_8 ( (1-x)\mu_1 + (1-y) /\sqrt{\e_1})},
\end{eqnarray*}
where $a_j, w_j^x, w_j^y, b_j$, $j=1,\ldots,n$, and $\gamma_{\ell}$, $\ell=1,\ldots,8$, are trainable parameters, while $d_i$, $i=1,\ldots,8$, are the coefficients of the layer enrichments. In more than one dimension, the exponential ansatz alone does not automatically enforce $u=0$ along every boundary edge; exact strong enforcement therefore requires a separate boundary-correction parametrization. We utilize a transfinite (Coons) boundary projector \cite{Coons}:
\begin{eqnarray*}
   u &=& v - \left[(1-x)v(0,y)+xv(1,y)+(1-y)v(x,0)+yv(x,1)\right] \\
         &+& [(1-x)(1-y)v(0,0)+x(1-y)v(1,0) + (1-x)yv(0,1)+xyv(1,1)].
\end{eqnarray*}
The derivatives of the boundary projector are included analytically in the residual evaluation. 
We use a PINN loss, since the problem is not self-adjoint, and we first conduct a systematic study to choose the hyperparameters, for $\e_1 = 10^{-5}, \e_2 = 10^{-2}$, with the results shown in Table \ref{table4}. For the remaining tests we use the following common baseline configuration (chosen as a compromise between accuracy and cost rather than as the minimizer of every one-at-a-time study):
$$
\text{Epochs} = 6000, \text{Neurons} = 30, \; \text{Shishkin points} = 16, \; \text{Learning rate} = 0.01.
$$

\begin{table}[htbp]
\centering
\caption{Hyperparameter study for Example 3 with
$\varepsilon_1=10^{-5}$ and $\varepsilon_2=10^{-2}$.  (While varying the one parameter, the rest were fixed at the values shown in bold.)}
\label{table4}

\renewcommand{\arraystretch}{1.15}
\setlength{\tabcolsep}{10pt}

\begin{tabular}{cc}

\begin{tabular}{cc}
\multicolumn{2}{c}{\textbf{Neurons}}\\
\toprule
Number & $L^\infty$ error\\
\midrule
10 & $9.9334\times10^{-3}$\\
20 & ${1.1917\times10^{-3}}$\\
\textbf{30} & $\mathbf{6.6098\times10^{-4}}$\\
40 & $9.3678\times10^{-4}$\\
\bottomrule
\end{tabular}

&

\begin{tabular}{cc}
\multicolumn{2}{c}{\textbf{Learning rate}}\\
\toprule
Rate & $L^\infty$ error\\
\midrule
$10^{-1}$ & $1.7726\times10^{-3}$\\
$\mathbf{10^{-2}}$ & $\mathbf{6.6098\times10^{-4}}$\\
$10^{-3}$ & $1.4583\times10^{-2}$\\
\bottomrule
\end{tabular}

\\[8mm]

\begin{tabular}{cc}
\multicolumn{2}{c}{\textbf{Training epochs}}\\
\toprule
Epochs & $L^\infty$ error\\
\midrule
1000 & $1.5792\times10^{-2}$\\
2000 & $7.1778\times10^{-3}$\\
3000 & $3.9920\times10^{-3}$\\
4000 & $2.5491\times10^{-3}$\\
5000 & $1.6188\times10^{-3}$\\
\textbf{6000} & $\mathbf{6.6097\times10^{-4}}$\\
7000 & $4.9162\times10^{-4}$\\
\bottomrule
\end{tabular}

&

\begin{tabular}{cc}
\multicolumn{2}{c}{\textbf{Shishkin points}}\\
\toprule
Points & $L^\infty$ error\\
\midrule
4  & $1.8931\times10^{-3}$\\
8  & $9.5260\times10^{-4}$\\
\textbf{16} & $\mathbf{9.1930\times10^{-4}}$\\
32 & $9.3679\times10^{-4}$\\
\bottomrule
\end{tabular}

\end{tabular}

\end{table}

\begin{table}[ht]
	\centering
	\caption{Error measures for the BL-NN emulation, for Example 3.}
	\begin{tabular}{lccccc}
		\hline
		$\e_1$ & $\e_2$ & ${L^{\infty}}$ error & ${L^{2}}$ error & $ H^{1}$ error & Energy error \\
		\hline
		$10^{-3}$ & $10^{-1}$ & $4.4577 \times 10^{-4}$ & $ 1.6112 \times 10^{-4}$ & $2.8410 \times 10^{-3}$ & $2.4481 \times 10^{-4}$\\ 
		$10^{-5}$& $10^{-2}$ & $4.8112 \times 10^{-4}$ & $ 1.9654 \times 10^{-4}$ & $1.6510 \times 10^{-2}$ & $2.8270 \times 10^{-4}$\\
		$10^{-7}$ & $10^{-3}$ & $9.4217\times 10^{-4}$ & $ 3.4012\times 10^{-4}$ & $2.0946\times 10^{-1}$ & $4.8554\times 10^{-4}$\\ 
		$10^{-9}$ & $10^{-4}$ & $9.4957 \times 10^{-4}$ & $ 3.5581 \times 10^{-4}$ & $2.3121 \times 10^{-0}$ & $5.0847\times 10^{-4}$\\ 
		$10^{-11}$ & $10^{-5}$ & $9.3123 \times 10^{-4}$ & $ 3.5551\times 10^{-4}$ & $2.3140\times 10^{+1}$ & $5.0806\times 10^{-4}$\\ 
 \hline
	\end{tabular}
	\label{tableEx3_errors}
\end{table}

Table \ref{tableEx3_errors} shows various error measures for values of $\e_1 \ll \e_2^2$. Over the tested parameter range the maximum-norm error remains below $10^{-3}$, while the $L^2$ and energy errors are likewise nearly parameter-independent. In contrast, the unweighted $H^1$ error grows substantially as the perturbation parameters decrease. This is consistent with the layer structure: derivatives inside increasingly thin layers scale inversely with the layer width, whereas the energy norm weights the derivative contribution by the diffusion parameter. The lengthiest run took about 2 minutes. In Figure \ref{fig_ex3} we show the emulated solution, as well as the absolute error, for $\e_1 = 10^{-3}, \e_2 = 10^{-1}$, and in Figure \ref{fig_ex3_train} we show the training history. It is evident that the proposed method generalizes to two dimensions with satisfactory results.

 \begin{figure}[h!]
\begin{center}
\includegraphics[width=0.5\textwidth]{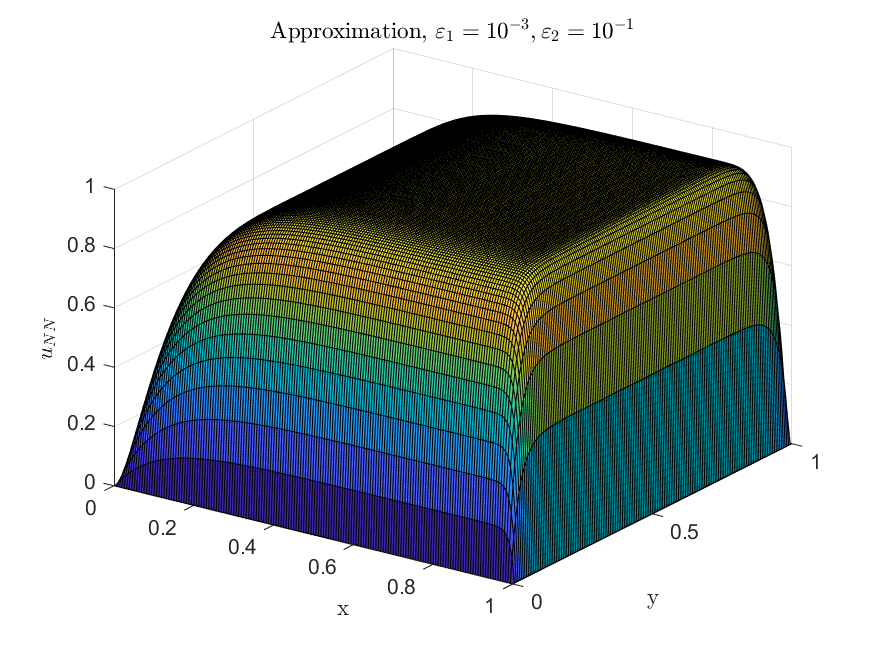}
\mbox{ }
\includegraphics[width=0.475\textwidth]{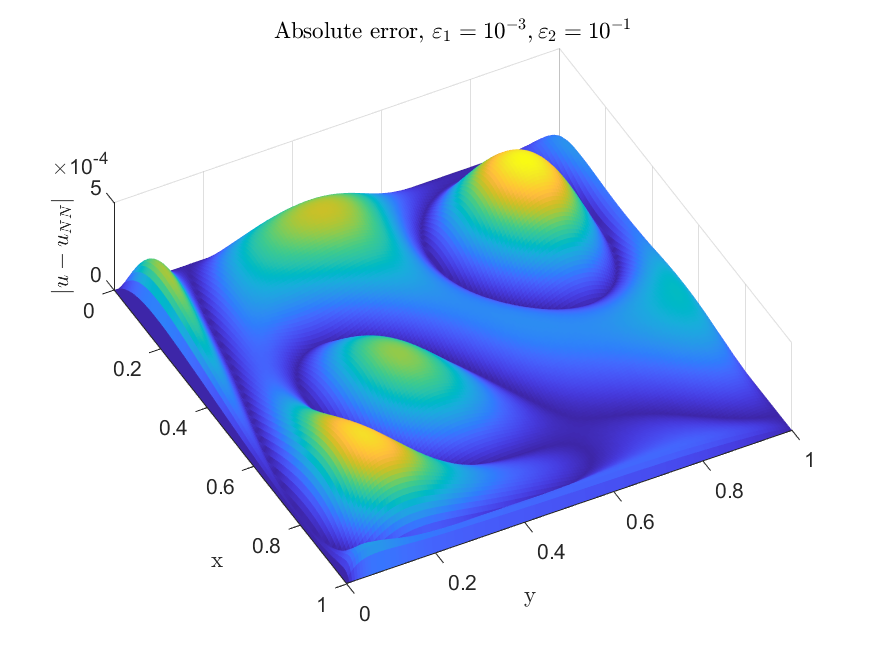}
\end{center}
\caption{Left: emulated solution using the BL-NN, for Example 3. Right: absolute error between $u_{NN}$ and the exact solution.}
\label{fig_ex3}
\end{figure}

 \begin{figure}[h!]
\begin{center}
\includegraphics[width=0.5\textwidth]{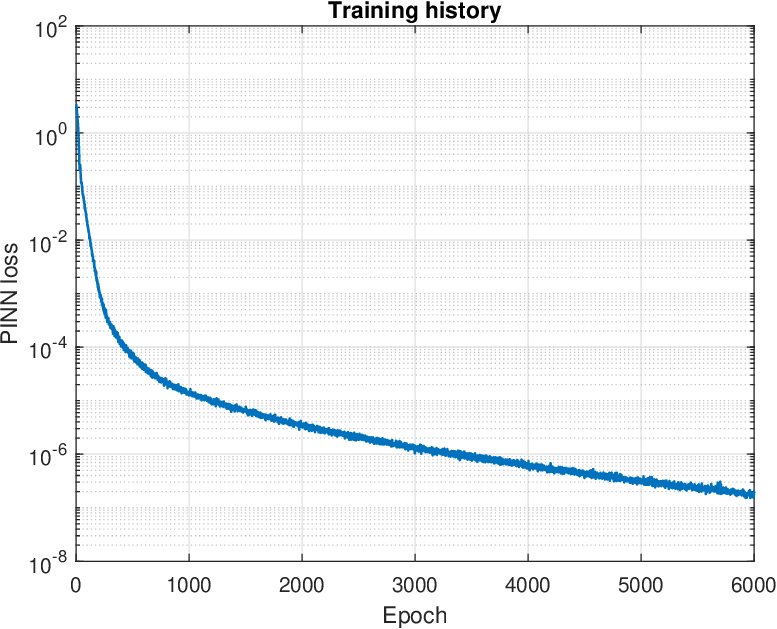}
\end{center}
\caption{Training history for Example 3, $\e_1 = 10^{-3}, \e_2 = 10^{-1}$.}
\label{fig_ex3_train}
\end{figure}

\vspace{1cm}

\noindent
\textbf{Example 4:} 
\vspace{0.2cm}

\noindent
In this final example, we consider a smooth domain in order to show that the method may be extended in a non-tensor-product way to two dimensions. For brevity we do not include the results of the ablation study.

We consider the BVP \eqref{pde_2D}--\eqref{bc_2D}, with $\varepsilon_1 = \varepsilon^2, \varepsilon_2 = 0, c(x,y) = 1$, i.e. a one-parameter reaction-diffusion SPP, where $\Omega \in \mathbb{R}^2$ is now a smooth domain. The load term $f(x,y)$ is chosen so that the exact solution is
\begin{equation*}
	u(x,y) = V(x,y) - V_{\partial \Omega}(\theta)\,B_{\varepsilon}(x,y),
\end{equation*}
where 
\[
V(x,y) = 1 + x + y,
\qquad
V_{\partial \Omega}(\theta) = 1 + R(\theta)\cos\theta + R(\theta)\sin\theta,
\]
and
\begin{equation*}
	B_{\varepsilon}(x,y)
	= \frac{\exp\!\left(-d_r(x,y)/\varepsilon\right) - \exp\!\left(-R(\theta)/\varepsilon\right)}{1 - \exp\!\left(-R(\theta)/\varepsilon\right)}.
\end{equation*}
Here $R(\theta)$, $\theta\in[0,2\pi]$ is the boundary radius function, that describes the distance from the
origin to the boundary at angle $\theta$, and $d_r(x,y)=R(\theta)-r$ is the radial boundary coordinate, i.e., the distance to the boundary measured along the ray of fixed polar angle $\theta$. In general $d_r$ is not the Euclidean signed-distance function. Thus
$d_r(x,y) = R(\theta) - r, \,\, r = \sqrt{x^{2}+y^{2}}$.

We consider a \textit{lima\c{c}on} domain, for which $R(\theta)=1+0.5\cos\theta$. We utilize a configuration of 80 neurons and 2500 epochs, with learning rate $0.01$. We use a Ritz method, i.e.~we minimize
$$
\mathcal{J}(v)=\frac{1}{2} \int_{\Omega}\left(\varepsilon^2|\nabla v|^2+v^2\right) \; d x d y-\int_{\Omega} f v \; d x d y,
$$
with respect to the learnable parameters.
We choose as trial solution
$$
u_{N N}(r, \theta)=S(r \cos \theta, r \sin \theta)-S(R(\theta) \cos \theta, R(\theta) \sin \theta) e^{-(R(\theta)-r) / \varepsilon},
$$
with
$$
S(x, y)=a_0+\sum_{j=1}^n a_j \tanh \left(w^x_{j} x+w^y_{j} y+b_j\right) ,
$$
where $\{a_j, w^x_{j}, w^y_{j}, b_{j}\}$ are learnable parameters. There holds
$$
u_{N N}(R(\theta), \theta)=0,
$$
hence the boundary conditions are strongly enforced.

Table \ref{Ex6_limacon} reports different errors for various values of the perturbation parameter $\varepsilon$. Figure \ref{fig_Ex6_limacon} shows the BL-NN solution and the absolute pointwise error, for $\varepsilon = 10^{-6}$. We mention that each run took about 6 minutes.

\begin{table}[ht]
\caption{Errors over the \textit{lima\c{c}on} using the BL-NN, for Example 4.}
\label{Ex6_limacon}
\centering
\begin{tabular}{ccccc}
\hline
$\varepsilon$ & $L^\infty$ error & $L^2$ error & Relative $L^2$ error & Energy error \\
\hline
$10^{-3}$ & $8.4638 \times 10^{-3}$ & $4.4117 \times 10^{-3}$ & $1.4220 \times 10^{-3}$ & $4.4227 \times 10^{-3}$ \\
$10^{-5}$ & $8.8059 \times 10^{-3}$ & $4.4237\times 10^{-3}$ & $1.4235 \times 10^{-3}$ & $4.4238 \times 10^{-3}$ \\
$10^{-7}$ & $8.8110 \times 10^{-3}$ & $4.4239\times 10^{-3}$ & $1.4235 \times 10^{-3}$ & $4.4239 \times 10^{-3}$ \\
$10^{-9}$ & $8.8115 \times 10^{-3}$ & $4.4239\times 10^{-3}$ & $1.4235 \times 10^{-3}$ & $4.4238 \times 10^{-3}$ \\
$10^{-11}$ & $8.8115 \times 10^{-3}$ & $4.4238 \times 10^{-3}$ & $1.4235 \times 10^{-3}$ & $4.4238 \times 10^{-3}$ \\
\hline
\end{tabular}
\end{table}

\begin{figure}[h!]
	\begin{center}
		\includegraphics[width=0.45\textwidth]{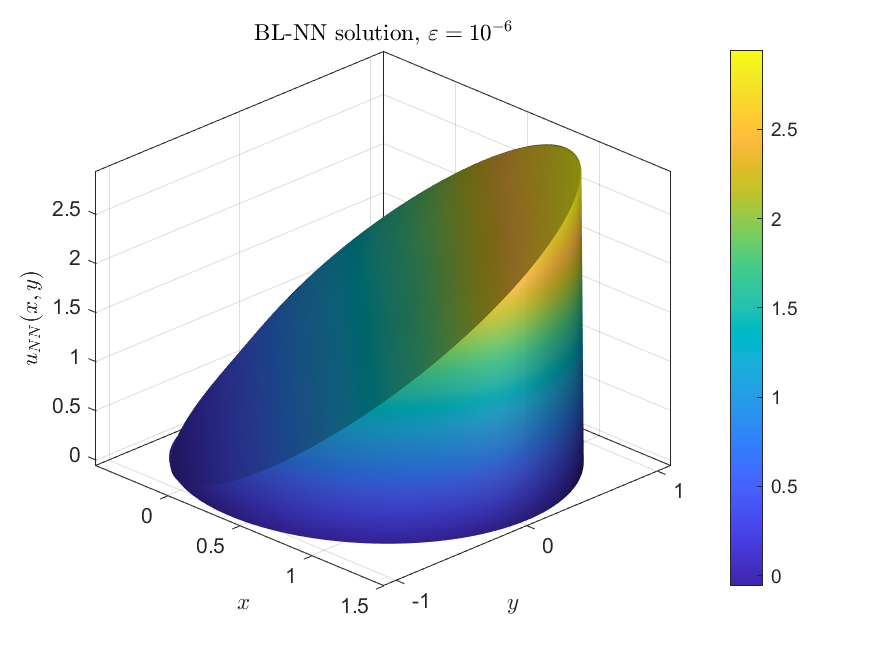}
		\mbox { }
		\includegraphics[width=0.45\textwidth]{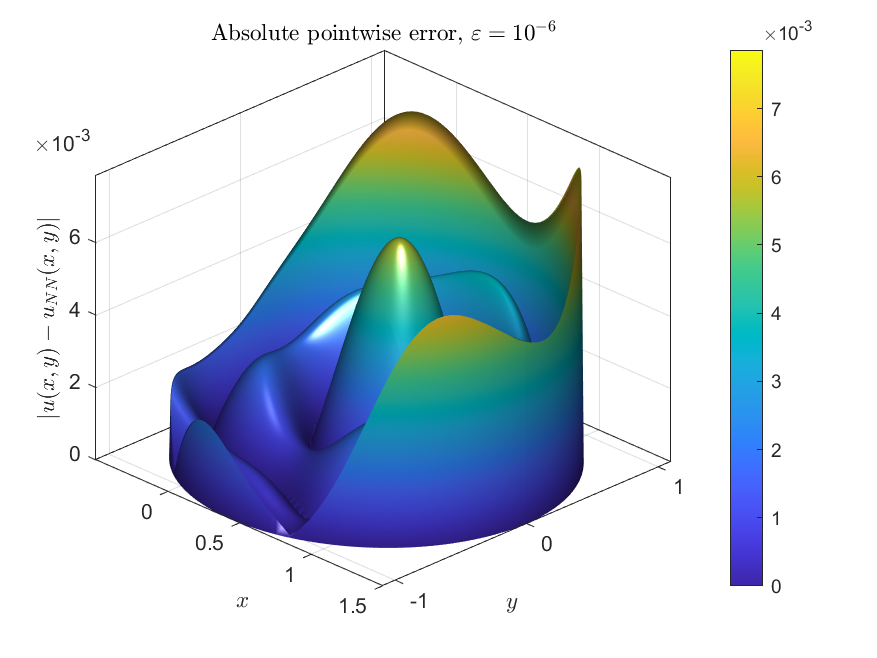}
	\end{center}
	\caption{The BL-NN solution (left), and the absolute pointwise error (right) for $\varepsilon = 10^{-6}$ on the lima\c{c}on.}
	\label{fig_Ex6_limacon}
\end{figure}

For this manufactured reaction-diffusion problem on a smooth domain, the errors remain essentially unchanged over the tested range $10^{-3}\geq\varepsilon\geq10^{-11}$. This experiment therefore provides empirical evidence of parameter-robust behavior for this geometry.



\section{Conclusions}
\label{sec:Concl}
In this article we proposed a novel yet simple shallow $\tanh$ NN enhanced with exponentials, for the emulation of the solution to SPPs. Second order SPPs were considered, and we provided numerical results in one and two dimensions. This was an illustration (and extension) of the results in \cite{OSX} where the existence of a NN with these capabilities was shown.

The numerical experiments indicate that, when reliable information about the dominant layer locations and scales is available and incorporated into the approximation space, the resulting BL-NN can remain accurate over a wide range of perturbation parameters. The principal contribution is therefore an approximation-space design principle: combine a shallow $\tanh$ space for the regular behavior with explicit features reflecting the known asymptotic layer structure, and then couple this space with a suitable training objective. The experiments reported here use residual and energy minimization with Adam, and should be interpreted as evidence of robustness over the tested parameter ranges rather than as a proof of accuracy for all perturbation parameters or all optimization algorithms.

The same principle is potentially applicable to other SPPs for which the dominant layer structure is known, although the appropriate enrichments and boundary treatment must be derived for each problem. Future work includes the application of the proposed NN to more challenging SPPs, such as the \emph{Reissner--Mindlin} plate model (see, e.g., \cite{AD} and the references therein), as well as fluid-flow problems (see, e.g., \cite{TemamSgPert}). A further direction is a systematic study separating the effects of exponential enrichment, layer-adapted sampling, and the stochastic optimization algorithm, including repeated-run statistics.

\vspace{1cm}

\noindent
\textbf{Data Availability}

\noindent
No data was used for this research. The codes are available upon request.

\vspace{0.2cm}
\noindent
\textbf{Conflict of Interest}

\noindent
The authors, C.~Xenophontos and A. Raina, declare that they have no conflict of interest.

\vspace{0.2cm}
\noindent
\textbf{Funding}

\noindent
This research was supported by a University of Cyprus grant, NN4SPP (Dec.~2025 -- Dec.~2027).

\vspace{0.2cm}
\noindent
\textbf{Author Contributions}

\noindent
Both authors, C.~Xenophontos and A. Raina contributed equally to the research.

\vspace{0.2cm}
\noindent


\end{document}